\begin{document}
	\title{On the Casimir number and formal codegree of Haagerup-Izumi fusion rings}
	\author{ Ying Zheng,  Jiacheng Bao and Zhiqiang Yu }
	\date{}
	\maketitle
	
	\newtheorem{theo}{Theorem}[section]
	\newtheorem{prop}[theo]{Proposition}
	\newtheorem{lemm}[theo]{Lemma}
	\newtheorem{coro}[theo]{Corollary}
	\theoremstyle{definition}
	\newtheorem{defi}[theo]{Definition}
	\newtheorem{exam}[theo]{Example}
	\newtheorem{conj}[theo]{Conjecture}
	\newtheorem{rema}[theo]{Remark}
	\newtheorem{ques}[theo]{Question}
	
	\newcommand{\A }{\mathcal{A}}
	\newcommand{\C }{\mathcal{C}}
	\newcommand{\FPdim}{\text{FPdim}}
	\newcommand{\FQ }{\mathbb{Q}}
	\newcommand{\FC }{\mathbb{C}}
	\newcommand{\Gal}{\text{Gal}}
	\newcommand{\HI}{\mathcal{HI}}
	\newcommand{\I }{\mathcal{I}}
	\newcommand{\Q }{\mathcal{O}}
	\newcommand{\Rep}{\text{Rep}}
	\newcommand{\unit}{\mathbf{1}}
	\newcommand{\vvec}{\text{Vec}}
	\newcommand{\Y }{\mathcal{Z}}
	\newcommand{\Z }{\mathbb{Z}}
	
	\abstract
	For any cyclic group $\Z_n$, we first determine the Casimir number  and determinant of the  Haagerup-Izumi fusion ring  $\HI_{\Z_n}$, it turns out that they do not share the same set of prime factors. Then we show that all finite-dimensional irreducible representations of $\HI_{\Z_n}$ are defined over certain cyclotomic fields. As a direct result, we obtain  the formal codegrees of  $\HI_{\Z_n}$, which  satisfy the pseudo-unitary inequality.

	\bigskip
	\noindent {\bf Keywords:} Haagerup-Izumi fusion ring; Casimir number; formal codegree
	
	Mathematics Subject Classification 2020: 18M20
	
	\section{Introduction}

The discovery of the Haagerup fusion category is  one of the results  in the classification of subfactors of small index \cite{AH,Ha,Iz}. Explicitly, the rank (i.e., the number of isomorphism classes of simple objects) of the  Haagerup fusion category  is $6$, and its  fusion rule  is as follows:
\begin{align*}
g\otimes X=X\otimes g^2, g^3=\unit, X\otimes X=\unit\oplus X\oplus gX\oplus g^2X.
\end{align*}
Surprisingly, currently we still do not know whether the Drinfeld center of the Haagerup fusion category can be obtained from the semisimplification of tensor categories of representations of quantum groups at root of unity. Meanwhile, with the help of the Cuntz algebra,  M. Izumi generalized the Haagerup fusion rules to a family of fusion rings determined by an arbitrary finite abelian group \cite{Iz}, which is called the Haagerup-Izumi fusion rings now.

Generally, let $G$ be a finite group. The Haagerup-Izumi fusion ring $\HI_G$ is a fusion ring with $\Z_+$-basis $\{g,gX|g\in G\}$ and they satisfy the following relations
	\begin{align*}
		g(hX) = ghX = (hX) g^{-1},\quad (gX) (hX) = gh^{-1}+\sum_{l\in G}lX,\quad \forall g,h \in G.
	\end{align*}
 M. Izumi characterized the existence of unitary fusion categories with Haagerup-Izumi fusion rules  using a family of nonlinear equations \cite{Iz}, and solutions were found for some groups of odd small orders. For example, if $G$ is trivial, then the corresponding fusion ring is Yang-Lee fusion ring; if $G=\Z_3$, then we get the Haagerup fusion ring (or category). As far as we know, by solving Izumi's equations,  there exist fusion categories with Haagerup-Izumi fusion rules for cyclic groups $\Z_n$ of  odd integers  less than or equal to $29$, see \cite{Bud,Iz} and the references therein.

On the one hand, in \cite{O1}, Ostrik defined the formal codegrees of fusion categories (or, generally fusion rings), which must be cyclotomic $d$-numbers (see \cite[Definition]{O1} for details). It was further proved that the formal codegrees of fusion categories satisfy the pseudo-unitary equation (Equation \ref{pseudounitary}), and they are useful in determining the structure of Drinfeld centers of fusion categories, see \cite{O2}. However, these properties fail for that of fusion rings, in general. Hence, we can use the formal codegrees to detect whether there is a fusion categories with  specific fusion rules.

On the other hand, the Casimir operator for Frobenius algebras was introduced in \cite{Lo}, and the author studied the Casimir operator for both  $H$ and the Grothendieck ring of  $\Rep(H)$, where $H$ is a Hopf algebra and $\Rep(H)$ is the category of finite-dimensional representations of $H$.  In \cite{WLL}, the authors defined the Casimir number and determinant of  fusion rings, and they investigated relations between these concepts and related concepts, such as the global dimension and Frobenius-Schur exponent of fusion categories, see \cite{EGNO} for specific definition. It  was asserted  in \cite{WLL} that the Casimir number and the determinant of the Grothendieck ring of a fusion category share the same set of prime factors. By determining  the Casimir number and  determinant of Haagerup-Izumi fusion rings $\HI_{\Z_n}$, we find that their statement is incorrect, see Proposition \ref{casimir-odd} and Proposition \ref{determinant}. Indeed, in the proof of \cite[Proposition 2.2]{WLL}, the authors used the commutativity of fusion rules inattentively.

This paper is organized as follows. In Section \ref{preliminaries}, we recall some definitions of fusion categories and fusion rings, we refer the reader to \cite{EGNO}. In Section \ref{section3}, we calculate  the Casimir number and determinant  of the Haagerup-Izumi fusion rings $\HI_{\Z_n}$,  and  we determine  all the irreducible representations and formal codegrees of  $\HI_{\Z_n}$ specifically (Lemma \ref{irre-rep-1} and Proposition \ref{irre-rep-2}), which then shows that $\HI_{\Z_n}$ satisfies the    the pseudo-unitary inequality (Theorem \ref{formdegreeHI}).
	\section{Preliminary}\label{preliminaries}
A  $\FC$-linear abelian category $\C$ is called a \emph{fusion category} if $\C$ is a finite semisimple tensor category \cite{EGNO}. In the following, we use $\Q(\C)$ and $\otimes$ to denote the set of isomorphism classes of simple objects of $\C$ and the monoidal product on $\C$, respectively.

Let $R$ be a fusion ring with $\Z_+$-basis $\{x_1,\cdots,x_s\}$. It is well-known \cite[Proposition 3.3.6]{EGNO} that the Frobenius-Perron homomorphism $\FPdim(-):R\to\FC$ is the unique ring homomorphism such that $\FPdim(x_j)\geq1$ for all  $x_j$. Moreover,  $\FPdim(x_j)$ is an algebraic integer and is defined as the Frobenius-Perron dimension of  $x_j$. The sum
\begin{align*}
\FPdim(R):=\sum_{j=1}^s\FPdim(x_j)^2
\end{align*}
is called  the Frobenius-Perron dimension of fusion ring $R$. For example, let $R=\HI_{\Z_n}$, then  $\FPdim(gX)=\frac{n+\sqrt{n^2+4}}{2}$, $g\in \Z_n$, and   $\FPdim(\HI_{\Z_n})=n\sqrt{n^2+4}(\frac{n+\sqrt{n^2+4}}{2})$.

Let $\C$ be a pivotal fusion category with a pivotal structure $j$, then  the  categorical dimensions $\dim(-)$ determined by $j$ induces a homomorphism from $\text{Gr}(\C)$ to $\FC$.  If   $\dim_j(X)=\dim_j(X^*)$ is true for all objects $X$ of $\C$, then $\C$ said to be  spherical \cite{EGNO}. And  the global dimension of $\C$ is defined as
\begin{align*}
\dim(\C):=\sum_{X\in\Q(\C)}\dim(X)^2.
\end{align*}

	Let $\C$ be a fusion category. The \emph{Casimir operator} of the Grothendieck ring $\text{Gr}(\C)$ (or, generally a fusion ring) \cite[Section 3.1]{Lo} is the map $c$ from $\text{Gr}(\C)$ to its center $Z(\text{Gr}(\C))$ given by
	\begin{align*}
		c(a)=\sum_{X\in\Q(\C)}XaX^*,~ \text{for}~a \in \text{Gr}(\C).
	\end{align*}
	The element $c(\unit)=\sum_{X\in\Q(\C)}XX^*$ is called the \emph{Casimir element} of $\text{Gr}(\C)$, where $\unit$ is the unit object of $\C$.   The intersection $\Z\cap\text{Im}(c)$ is a nonzero principal ideal of $\Z$, and the positive generator of $\Z\cap\text{Im}(c)$  is called the \emph{Casimir number} of $\C$ \cite{WLL}.
	It is well-known  that the matrix $[c(\unit)]$ of left multiplication by $c(\unit)$ with respect to the basis   of $\text{Gr}(\C)$ is a positive definite integer matrix, and  its determinant $\text{det}[c(\unit)]$ is called  the \emph{ determinant} of $\C$ \cite{WLL}.

	Given   a fusion category $\C$, let $\text{Irr}(\text{Gr}(\C))$ be the set of equivalence classes of irreducible representations of  the associative algebra $\text{Gr}(\C)\otimes_\Z\FC$. For   irreducible representations $\varphi,\varphi'\in \text{Irr}(\text{Gr}(\C))$, let $\text{Tr}_\varphi(-)$ be the ordinary trace function on the representation $\varphi$. Then there exists a central  element
	\begin{align*}
		\alpha_\varphi:=\sum_{X\in\Q(\C)}\text{Tr}_\varphi(X)X^*\in \text{Gr}(\C)\otimes_\Z\FC
	\end{align*}
	such that $\varphi'(\alpha_\varphi)=0$ if $\varphi\ncong\varphi'$, and $\alpha_\varphi$ acts on $\varphi$ as a   positive algebraic integer $f_\varphi$ \cite{Lu,O1}, called the  \emph{formal codegree }of $\C$ \cite{O1}.

It was shown in \cite[Lemma 2.6]{O1} that $c(\unit)$ acts on $\varphi$ as $f_\varphi \varphi(\unit)\cdot\text{id}$. For example, given a spherical fusion category $\C$,  the formal codegree determined by the Frobenius-Perron homomorphism and dimension homomorphism $\dim(-)$ are  the Frobenius-Perron dimension $\FPdim(\C)$ and global dimension $\dim(\C)$ of $\C$, respectively. Moreover, the formal codegrees of a spherical fusion category $\C$ satisfy the \emph{pseudo-unitary  equation }\cite[Theorem 2.21]{O2}
	\begin{align}\label{pseudounitary}
		\sum_{\varphi\in \text{Irr}(\text{Gr}(\C))}\frac{1}{f_\varphi^2}\leq\frac{1}{2}\left(1+\frac{1}{\dim(\C)}\right).
	\end{align}

	\section{The Casimir number and formal codegrees of $\mathcal{HI}_{\mathbb{Z}_n}$}\label{section3}
	In this section, we will first compute the Casimir number and determinant of $\HI_{\Z_n}$, then we give a complete equivalence classes of irreducible representations and the the corresponding formal codegrees of $\HI_{\Z_n}$.
	\subsection{The Casimir number and determinant}
	In the following, we denote
	\begin{align*}
		Y_0 = \unit, Y_1 = g,... \quad Y_{n-1} = g^{n-1}, Y_n = X, Y_{n+1} = gX, ... Y_{2n-1} = g^{n-1}X.
	\end{align*}

	\begin{theo}\label{casimir-odd}
		If n is odd, then the Casimir number of $\HI_{\Z_n}$ is $n(n^2+4)$;
	\end{theo}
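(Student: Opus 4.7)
The plan is to compute the Casimir operator $c$ explicitly on the basis $\{g^i, g^iX : i \in \Z_n\}$ of $\HI_{\Z_n}$, describe $\text{Im}(c)$ as a concretely-generated $\Z$-submodule, and then solve a small linear system to determine the positive generator of $\Z \cap \text{Im}(c)$.

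First I would record the self-duality data $(g^i)^* = g^{-i}$ and $(g^iX)^* = g^iX$ (the latter because $(gX)(gX)$ contains $\unit$ with multiplicity one). Using the relations $X g = g^{-1}X$ and $(gX)(hX) = gh^{-1} + \sum_l lX$, a direct expansion yields
\begin{align*}
c(\unit) &= 2n\unit + n\sum_{l\in\Z_n} g^l X,\\
c(g^k) &= n(g^k + g^{-k}) + n\sum_{l\in\Z_n} g^l X \quad (k\neq 0),\\
c(g^k X) &= n\sum_{l\in\Z_n} g^l + (n^2+2)\sum_{l\in\Z_n} g^l X.
\end{align*}
The crucial simplification occurs in $c(g^kX)$: after expanding the triple product one encounters sums of the form $\sum_{i\in\Z_n} g^{2i+c}X$, and since $n$ is odd the map $i\mapsto 2i$ is a bijection of $\Z_n$, so each such sum collapses to $\sum_l g^l X$ regardless of $c$. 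In particular $c(g^kX)$ does not depend on $k$, and together with $c(g^k)=c(g^{-k})$ this shows that $\text{Im}(c)$ is $\Z$-generated by only $\tfrac{n+3}{2}$ distinct vectors.

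To finish I would parametrize an arbitrary element of $\text{Im}(c)$ as $\alpha_0\, c(\unit) + \sum_{k=1}^{(n-1)/2} \alpha_k\, c(g^k) + \beta\, c(gX)$ and demand that every non-identity basis coefficient vanish. The $g^k$-coefficients ($k\neq 0$) force $\alpha_k = -\beta$ for every $k \geq 1$, and the $g^lX$-coefficients then reduce to the single Diophantine equation $2n\alpha_0 + (n^2+n+4)\beta = 0$. Since $n$ is odd, $n^2+n+4$ is even and $(n^2+n+4)/2 \equiv 2 \pmod n$, so $\gcd(n,(n^2+n+4)/2)=1$, which forces $\beta = nt$ for some $t \in \Z$; substituting back, the coefficient of $\unit$ works out to $-n(n^2+4)t$. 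Hence $\Z \cap \text{Im}(c) = n(n^2+4)\Z$ and the Casimir number equals $n(n^2+4)$.

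The main technical obstacle is the clean bookkeeping of the triple product $(g^iX)(g^kX)(g^iX)$ so as to expose the $2i$-bijection on which the whole argument hinges. The odd-$n$ hypothesis enters precisely there and nowhere else, which indicates that the even case will require a genuinely different formula for $c(g^kX)$ and hence a separate Diophantine computation.
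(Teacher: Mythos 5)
Your proposal is correct and follows essentially the same route as the paper: both compute $c$ on the basis of $\HI_{\Z_n}$ (using the collapse of $\sum_i g^{2i+c}X$ to $\sum_l g^lX$ for odd $n$), reduce membership of $\Z\unit$ in $\text{Im}(c)$ to the single Diophantine equation $2n\alpha_0+(n^2+n+4)\beta=0$, and solve it via $\gcd\bigl(n,\tfrac{n^2+n+4}{2}\bigr)=1$ to get the generator $n(n^2+4)$. The only quibble is your closing remark that oddness enters ``nowhere else'': it is also used in that final gcd step, as your own argument shows.
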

	\begin{proof}
		Let $x= \sum_{i=0}^{2n-1}\lambda_{i}Y_{i} \in \HI_{\Z_n}$, where $\lambda_i\in\Z$.
		Then we have
		\begin{align*}
			c(x) &= \sum_{i=0}^{2n-1} Y_i x Y_i^* \\
			&= \big[2n\lambda_0 + n(\lambda_n + \cdots + \lambda_{2n-1})\big]\unit \\
			&\quad + \big[n\lambda_1 + n\lambda_{n-1} + n(\lambda_n + \cdots + \lambda_{2n-1})\big]Y_1 \\
			&\quad + \cdots \\
			&\quad + \big[n\lambda_{n-1} + n\lambda_1 + n(\lambda_n + \cdots + \lambda_{2n-1})\big]Y_{n-1} \\
			&\quad + \big[n(\lambda_0 + \lambda_1 + \cdots + \lambda_{n-1}) + (n^2 + 2)(\lambda_n + \cdots + \lambda_{2n-1})\big]Y_n \\
			&\quad + \cdots \\
			&\quad + \big[n(\lambda_0 + \lambda_1 + \cdots + \lambda_{n-1}) + (n^2 + 2)(\lambda_n + \cdots + \lambda_{2n-1})\big]Y_{2n-1}.
		\end{align*}
		It follows from the definition of the Casimir number that
		\begin{align*}
			n\lambda_{1}+n\lambda_{n-1}&+n(\lambda_{n}+...+\lambda_{2n-1}) = 0, \\
			n\lambda_{2}+n\lambda_{n-2}&+n(\lambda_{n}+...+\lambda_{2n-1}) = 0, \\
			&\vdots\\
			n\lambda_{n-1}+n\lambda_{1}&+n(\lambda_{n}+...+\lambda_{2n-1}) = 0, \\
			n(\lambda_{0}+\lambda_{1}+\cdots&+\lambda_{n-1})+(n^2+2)(\lambda_{n}+...+\lambda_{2n-1}) = 0.
		\end{align*}
Then we have
		\begin{align*}
			&2(\lambda_{1}+...+\lambda_{n-1})+(n-1)(\lambda_{n}+...+\lambda_{2n-1}) = 0,\\
			&2n\lambda_{0}+(n^2+n+4)(\lambda_{n}+...+\lambda_{2n-1}) = 0.
		\end{align*}
		Therefore,  $c(x) = [2n\lambda_{0}+n(\lambda_{n}+...+\lambda_{2n-1})]\unit = \frac{2n(n^2+4)}{n^2+n+4}\lambda_{0}\unit$. Notice that both   $n$ and   $(n^2+4)$ are relatively prime to $n^2+n+4$, and $(n^2+n+4)$ is even, so $\lambda_0=\frac{n^2+n+4}{2}$ is the smallest positive integer such that the ratio $\frac{2n(n^2+4)}{n^2+n+4}\lambda_{0}$ is an integer. Then we have   $c(x) = n(n^2+4)\unit$, and the Casimir number is $n(n^2+4)$ by definition.
	\end{proof}

	\begin{prop}\label{casimir-even}
		If $n$ is even, then the Casimir number  of the fusion ring $\HI_{\Z_n}$ is $\frac{n(n^2+4)}{2}$ if $4\mid n$, otherwise it is $n(n^2+4)$.
	\end{prop}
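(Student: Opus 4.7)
The plan is to follow the strategy of Theorem \ref{casimir-odd}, paying careful attention to the parity-sensitive sums that arise when $n$ is even. Writing $x = \sum_{i=0}^{2n-1}\lambda_i Y_i$ with $\lambda_i \in \Z$, the identity $c(g^j) = n g^j + n g^{-j} + n\sum_{l=0}^{n-1}g^l X$ carries over unchanged, but the evaluation of $c(g^j X)$ is different: the inner sums $\sum_{i=0}^{n-1} g^{2i\pm j}$ no longer exhaust $\Z_n$ but only cover the coset $j + 2\Z_n$, with each element counted twice. I would therefore split the $X$-block into $E = \sum_{l \text{ even}} g^l X$ and $O = \sum_{l \text{ odd}} g^l X$ and obtain
\begin{align*}
c(g^j X) = n\sum_{l=0}^{n-1} g^l + \begin{cases}(n^2+4)E + n^2 O, & j \text{ even},\\ n^2 E + (n^2+4)O, & j \text{ odd}.\end{cases}
\end{align*}

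Next, introducing the partial sums $\Lambda_E = \sum_{j \text{ even}}\lambda_{n+j}$, $\Lambda_O = \sum_{j \text{ odd}}\lambda_{n+j}$, $\Lambda_X = \Lambda_E + \Lambda_O$ and $\Lambda_G = \sum_{j=0}^{n-1}\lambda_j$, I would impose the vanishing of every non-$\unit$ coefficient of $c(x)$. The $g^k X$ equations for the two parities together give $\Lambda_E = \Lambda_O$ and $(n^2+2)\Lambda_X + n\Lambda_G = 0$; the equations for $g^k$ with $1 \leq k \leq n-1$ reproduce (just as in the odd case) the relation $\Lambda_G = \lambda_0 - (n-1)\Lambda_X/2$. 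Eliminating $\Lambda_G$ yields $\Lambda_X = -2n\lambda_0/(n^2+n+4)$ and hence $c(x) = \frac{2n(n^2+4)}{n^2+n+4}\lambda_0\,\unit$, exactly the scalar relation of Theorem \ref{casimir-odd}.

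The genuinely new ingredient comes from the index $k = n/2$, which gives the equation $2\lambda_{n/2} + \Lambda_X = 0$ and forces $\Lambda_X$ to be \emph{even}, or equivalently $(n^2+n+4) \mid n\lambda_0$. Since $n^2+n+4 \equiv 4 \pmod{n}$, one has $\gcd(n^2+n+4, n) = \gcd(4, n)$, which equals $4$ when $4 \mid n$ and $2$ when $n \equiv 2 \pmod{4}$. Consequently the minimal positive $\lambda_0$ is $(n^2+n+4)/4$ when $4 \mid n$ and $(n^2+n+4)/2$ otherwise, yielding Casimir numbers $n(n^2+4)/2$ and $n(n^2+4)$ respectively. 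The main obstacle is bookkeeping: tracking the even/odd splitting through the coefficient computation, and confirming that the minimizing $\lambda_0$ is realized by an honest integer tuple $(\lambda_i)$, which is straightforward by placing $\Lambda_E$ and $\Lambda_O$ on single basis elements and distributing each constraint $\lambda_k + \lambda_{n-k} = -\Lambda_X$ asymmetrically across a pair.
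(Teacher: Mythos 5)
Your proof is correct and follows essentially the same route as the paper: compute $c(x)$, force all non-$\unit$ coefficients to vanish, reduce to $c(x)=\frac{2n(n^2+4)}{n^2+n+4}\lambda_0\,\unit$, and extract the answer from $\gcd(n^2+n+4,n)=\gcd(4,n)$. If anything, your formulation of the integrality condition as $(n^2+n+4)\mid n\lambda_0$ (coming from $\Lambda_X$ having to be an even integer, which already follows from $\Lambda_E=\Lambda_O$) is more careful than the paper's appeal to ``a similar argument as the odd case'': merely demanding that the $\unit$-coefficient $\frac{2n(n^2+4)}{n^2+n+4}\lambda_0$ be an integer is genuinely weaker (for $n=4$ it would allow $\lambda_0=3$ and yield $20$ instead of the correct $40$), so the constraint you isolate is the one actually needed.
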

	\begin{proof}
		For any $ x= \sum\limits_{i=0}^{2n-1} \lambda_{i}Y_{i} \in \HI_{\Z_n}$, where $\lambda_i\in\Z$, similarly,
		we have:
		\begin{align*}
			c(x) &= \sum_{i=0}^{2n-1} Y_i x Y_i^* \\
			&= \big[2n\lambda_0 + n(\lambda_n + \cdots + \lambda_{2n-1})\big]\unit \\
			&\quad + \big[n\lambda_1 + n\lambda_{n-1} + n(\lambda_{n} + \cdots + \lambda_{2n-1})\big]Y_1 \\
			&\quad + \cdots \\
			&\quad + \big[n\lambda_{n-1} + n\lambda_1 + n(\lambda_{n} + \cdots + \lambda_{2n-1})\big]Y_{n-1} \\
			&\quad + \big[n(\lambda_0 + \lambda_1 + \cdots + \lambda_{n-1}) + 4(\lambda_n + \lambda_{n+2} + \cdots + \lambda_{2n-2}) + n^2(\lambda_{n}+...+\lambda_{2n-1})\big]Y_{n} \\
			&\quad + \big[n(\lambda_0 + \lambda_1 + \cdots + \lambda_{n-1}) + 4(\lambda_{n+1} + \lambda_{n+3} + \cdots + \lambda_{2n-1}) + n^2(\lambda_{n}+...+\lambda_{2n-1})\big]Y_{n+1} \\
			&\quad + \cdots \\
			&\quad + \big[n(\lambda_0 + \lambda_1 + \cdots + \lambda_{n-1}) + 4(\lambda_n + \lambda_{n+2} + \cdots + \lambda_{2n-2}) + n^2(\lambda_{n}+...+\lambda_{2n-1})\big]Y_{2n-2} \\
			&\quad + \big[n(\lambda_0 + \lambda_1 + \cdots + \lambda_{n-1}) + 4(\lambda_{n+1} + \lambda_{n+3} + \cdots + \lambda_{2n-1}) + n^2(\lambda_{n}+...+\lambda_{2n-1})\big]Y_{2n-1}
		\end{align*}
		From the definition of Casimir number, 	
		\begin{align*}
			n\lambda_{1}+n\lambda_{n-1}&+n(\lambda_{n}+...+\lambda_{2n-1}) = 0,\\
			n\lambda_{2}+n\lambda_{n-2}&+n(\lambda_{n}+...+\lambda_{2n-1}) = 0,\\
			&\vdots\\
			n\lambda_{n-1}+n\lambda_{1}&+n(\lambda_{n}+...+\lambda_{2n-1}) = 0,\\
			n(\lambda_{0}+...+\lambda_{n-1})&+4(\lambda_{n}+\lambda_{n+2}+...+\lambda_{2n-2})+n^2(\lambda_{n}+...+\lambda_{2n-1}) = 0,\\
			n(\lambda_{0}+...+\lambda_{n-1})&+4(\lambda_{n+1}+\lambda_{n+3}+...+\lambda_{2n-1})+n^2(\lambda_{n}+...+\lambda_{2n-1})= 0.
		\end{align*}
These equations imply
		\begin{align*}
			&2(\lambda_{1}+...+\lambda_{n-1})+(n-1)(\lambda_{n}+...+\lambda_{2n-1}) = 0,\\
			&n(\lambda_{0}+\lambda_{1}+...+\lambda_{n-1})+(n^2+2)(\lambda_{n}+...+\lambda_{2n-1}) = 0,\\
			&2n\lambda_{0}+(n^2+n+4)(\lambda_{n}+...+\lambda_{2n-1}) = 0.
		\end{align*}
Therefore,  $\lambda_{n}+...+\lambda_{2n-1} =- \frac{2n}{n^2+n+4}\lambda_0$, it follows that $c(x) = \frac{2n(n^2+4)}{n^2+n+4}\lambda_0\unit$. Since $n$ is even, the largest common divisor  $(n^2+n+4,n)=\left\{  \begin{array}{ll}
            4, & \hbox{if $4\mid n$} \\
  2, & \hbox{if $4\nmid n$.}
        \end{array}
\right.$. By using a similar argument as  Proposition \ref{casimir-odd},    the Casimir number of Haagerup-Izumi fusion ring $\HI_{\Z_n}$ is $\frac{n(n^2+4)}{2}$ if $4\mid n$, otherwise, it is $(n^2+4)n$.\end{proof}
	
	Next we compute the determinant of the Haagerup-Izumi fusion ring $\HI_{\Z_n}$.	
	\begin{prop}\label{determinant}
		The determinant of $\mathcal{HI}_{\mathbb{Z}_n}$ is $n^{2n}2^{2n-2}(n^2+4)$.
	\end{prop}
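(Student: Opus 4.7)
The plan is to compute $\det[c(\unit)]$ by writing the Casimir element explicitly in terms of the basis, recognizing the block structure of the multiplication matrix, and evaluating the determinant via Schur complements.

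First I would compute $c(\unit)=\sum_{i=0}^{2n-1}Y_iY_i^*$ directly from the fusion rules. Each $Y_i=g^i$ for $0\le i\le n-1$ contributes $Y_iY_i^*=\unit$, while each $Y_i=g^{i-n}X$ is self-dual (since $(gX)(gX)=\unit+\sum_l lX$ contains $\unit$), so the relation $(gX)(hX)=gh^{-1}+\sum_l lX$ gives $Y_iY_i^*=\unit+\sum_{l\in\Z_n}lX$. Summing yields $c(\unit)=2n\unit+nS$, where $S=\sum_{l\in\Z_n}lX$. This is consistent with the coefficient computations already performed in Theorem \ref{casimir-odd} and Proposition \ref{casimir-even}.

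Next I would compute left multiplication by $S$ on each basis element, using $(g^lX)g^j=g^{l-j}X$ and the mixed product formula. A short calculation gives $S\cdot g^j=S$ for $0\le j\le n-1$ and $S\cdot(g^{j'}X)=T+nS$ for $0\le j'\le n-1$, where $T=\sum_{l\in\Z_n}g^l$. With the basis ordered as $(Y_0,\dots,Y_{n-1},Y_n,\dots,Y_{2n-1})$, the matrix of left multiplication by $c(\unit)=2n\unit+nS$ therefore takes the block form
\begin{equation*}
[c(\unit)]=\begin{pmatrix} 2nI_n & nJ_n \\ nJ_n & 2nI_n+n^2J_n \end{pmatrix},
\end{equation*}
where $J_n$ is the $n\times n$ all-ones matrix. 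Getting this block structure cleanly — in particular checking signs and coefficients in $S\cdot(g^{j'}X)$ — is the one step that has to be done carefully, but it is routine.

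The rest is linear algebra. Since the top-left block $2nI_n$ is invertible, the Schur complement formula gives $\det[c(\unit)]=(2n)^n\det(2nI_n+n^2J_n-nJ_n\cdot(2n)^{-1}\cdot nJ_n)=(2n)^n\det(2nI_n+\tfrac{n^2}{2}J_n)$, using $J_n^2=nJ_n$. Because $J_n$ has eigenvalue $n$ with multiplicity one and $0$ with multiplicity $n-1$, the remaining determinant equals $(2n+\tfrac{n^3}{2})(2n)^{n-1}=\tfrac{n(n^2+4)}{2}(2n)^{n-1}$. Multiplying yields $(2n)^{2n-1}\cdot\tfrac{n(n^2+4)}{2}=2^{2n-2}n^{2n}(n^2+4)$, as claimed. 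No step is genuinely hard; the only place to be alert is the fusion-rule bookkeeping when identifying $S\cdot Y_j$, since the mixed-term $gh^{-1}$ rather than $gh$ is what produces the block $J_n$ in the lower-left corner.
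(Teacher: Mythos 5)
Your proof is correct and follows essentially the same route as the paper: both compute $c(\unit)=2n\unit+n\sum_{l}lX$, write the left-multiplication matrix in the block form $\left(\begin{smallmatrix}2nI_n & nJ_n\\ nJ_n & 2nI_n+n^2J_n\end{smallmatrix}\right)$, and evaluate the determinant by a Schur complement together with the eigenvalues of $J_n$. Your bookkeeping of the fusion rules and the final arithmetic both check out (indeed, your explicitly written Schur complement $2nI_n+\tfrac{n^2}{2}J_n$ is cleaner than the paper's shorthand $D-\tfrac{1}{2}B^2$, which should read $D-\tfrac{1}{2n}B^2$).
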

	\begin{proof}
		By definition, we have  $c(\unit)=2n\unit+n(Y_n+Y_{n+1}+...+Y_{2n-1})$. Then we have
		\begin{align*}
			c(\unit)(Y_0,Y_1,Y_2,...,Y_{2n-1})=(Y_0,Y_1,Y_2,...,Y_{2n-1})\left({\begin{array}{cc}
					2n\cdot I_{n} & B\\ B & D
			\end{array}}\right),
		\end{align*}	
		where $B=\left(
		\begin{array}{cccc}
			n & n&\cdots & n \\
			n&n&\cdots & n\\
			\vdots &\vdots&\ddots&\vdots\\
			n & n&\cdots & n \\
		\end{array}
		\right)
		$ and $D=\left(
		\begin{array}{cccc}
			n^2+2n&n^2&\cdots&n^2  \\
			n^2& n^2+2n & \cdots & n^2 \\
			\vdots & \vdots & \ddots & \vdots \\
			n^2 & n^2 & \cdots & n^2+2n \\
		\end{array}
		\right)$.
		Therefore, we have \begin{align*}
			\det\bigg(\left({\begin{array}{cc}
					2n\cdot I_{n} & B\\ B & D
			\end{array}}\right)\bigg)=\det\bigg(\left({\begin{array}{cc}
					2n\cdot I_{n} & 0\\ B & D-\frac{1}{2}B^2
			\end{array}}\right)\bigg)=n^{2n}2^{2n-2}(n^2+4).
		\end{align*}
		This finishes the proof of the proposition.
	\end{proof}
	\begin{rema}If  $ n$ is odd, Theorem \ref{casimir-odd} states that the Casimir number of $\mathcal{HI}_{\Z_n}$ is $n(n^2+4)$, which is odd too, so it does not share  the same set of prime divisors with the determinant of $\mathcal{HI}_{\Z_n}$ by Proposition \ref{determinant}. Note that there exist fusion categories with Haagerup-Izumi fusion rules for odd integers less than $31$ \cite{Bud}, therefore, this provides a counterexample to \cite[Corollary 2.4]{WLL}. Indeed,  in the proof of \cite[Proposition 2.2]{WLL}, the authors  used the commutativity of the fusion rules of fusion categories implicitly.
	\end{rema}

	\subsection{Irreducible representations and formal codegrees of $\mathcal{HI}_{\mathbb{Z}_n}$}
	In this subsection, we  determine all the irreducible representations  of the associative algebra $A:=\HI_{\Z_n}\otimes_\Z\FC$ and  the  formal codegrees of $\HI_{\Z_n}$.
	
	Note the fusion rules of  $\mathcal{HI}_{\mathbb{Z}_n}$ is determined by $Y_1$ and $Y_n$ with relations
	\begin{align}\label{genEquat}
		Y_1^n=1, Y_1Y_n=Y_nY_1^{n-1}, Y_n^2=\unit+\sum_{i=1}^nY_1^iY_n.
	\end{align}
	Let $\rho$ be an one-dimensional representation of $A$. Denote $x:=\rho(Y_1)$ and $y:=\rho(Y_n)$.
	
	\begin{lemm}\label{irre-rep-1}
		If $n$ is odd, then $\rho$ is given by
		\begin{align*}
			\left\{
			\begin{array}{ll}
				x=1,   \\
				y=\frac{n+\sqrt{n^2+4}}{2},
			\end{array}
			\right.\left\{
			\begin{array}{ll}
				x=1,   \\
				y=\frac{n-\sqrt{n^2+4}}{2};
			\end{array}
			\right.
		\end{align*}
		if $n$ is even, then $\rho$ is isomorphic to one of the following
		\begin{align*}
			\left\{
			\begin{array}{ll}
				x=1,   \\
				y=\frac{n+\sqrt{n^2+4}}{2},
			\end{array}
			\right.\left\{
			\begin{array}{ll}
				x=1,   \\
				y=\frac{n-\sqrt{n^2+4}}{2},
			\end{array}
			\right.
			\left\{
			\begin{array}{ll}
				x=-1,   \\
				y=1,
			\end{array}
			\right.\left\{
			\begin{array}{ll}
				x=-1,   \\
				y=-1.
			\end{array}
			\right.
		\end{align*}
	\end{lemm}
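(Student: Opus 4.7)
The plan is to translate the three defining relations listed in \eqref{genEquat} into scalar equations in $x=\rho(Y_1)$ and $y=\rho(Y_n)$. The relation $Y_1^n=\unit$ gives $x^n=1$; the commutation relation $Y_1Y_n=Y_nY_1^{n-1}$ yields $xy=yx^{n-1}$; and the quadratic relation for $Y_n^2$ becomes $y^2=1+y\sum_{i=1}^{n}x^i$. These are the only constraints the scalars must satisfy, and solving them should produce exactly the list in the lemma.

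The central observation I would extract is the following. Using $x^n=1$ to rewrite $x^{n-1}=x^{-1}$, the commutation relation reduces to $(x^2-1)y=0$. If $y=0$, the $Y_n^2$ relation would force $0=1$, which is impossible; hence $y\neq 0$ and $x^2=1$, so $x\in\{\pm1\}$. Since $x^n=1$ must still hold, $x=-1$ is admissible only when $n$ is even, while $x=1$ is always admissible.

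To finish, I would treat the two values of $x$ separately. When $x=1$, one has $\sum_{i=1}^{n}x^i=n$, so $y$ satisfies the quadratic $y^2-ny-1=0$, yielding the two roots $y=\frac{n\pm\sqrt{n^2+4}}{2}$ displayed in the statement. When $n$ is even and $x=-1$, the alternating sum $\sum_{i=1}^{n}(-1)^i$ vanishes, so the quadratic collapses to $y^2=1$, giving $y=\pm1$. Assembling the admissible cases produces two characters when $n$ is odd and four when $n$ is even, matching the lemma exactly.

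The only subtlety is verifying $y\neq 0$ \emph{before} dividing by $y$ in the commutation relation, which is what genuinely forces $x^2=1$ rather than leaving that relation vacuous; apart from this, the argument is a direct computation, and I do not anticipate any real obstacle.
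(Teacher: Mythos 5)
Your argument is correct and follows essentially the same route as the paper: both reduce the relations of \eqref{genEquat} to $x^n=1$ and $x^{n-2}=1$ (equivalently $x^2=1$) after observing $y\neq0$, then solve the quadratic $y^2=1+y\sum_{i=1}^n x^i$ in each case. Your explicit justification that $y\neq0$ (via the $Y_n^2$ relation) is a small point the paper leaves implicit, but the substance is identical.
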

	\begin{proof}
		The Equations (\ref{genEquat}) imply that $x^n=1$ and $xy=x^{n-1}y$. Note that $y\neq0$, so $x$ is also a $(n-2)$-th root of unity. Therefore,  $x=1$ if $n$ is odd, then $y=\frac{n\pm\sqrt{n^2+4}}{2}$. If $n$ is even and $x\neq1$, then   $x=-1$ and $y^2=1$ by Equations (\ref{genEquat}) .
	\end{proof}
If $n=1,2$, then $\HI_{\Z_n}$ is commutative, it is easy to see that homomorphisms in Lemma \ref{irre-rep-1} are all the isomorphism classes of irreducible representations of $A$. We assume $n\geq3$ below.
	
	Let $V_i$ be a two-dimensional vector space  over $\FC$, and let $\{u_1^{(i)},u_2^{(i)}\}$ be a basis of $V_i$,  where $1\leq i\leq \big[\frac{n-1}{2}\big]$, and $[a]$ denotes the  maximal integer that is less than or equal to $a$. We define
	\begin{align}\label{modrelation}
		\left\{
		\begin{array}{ll}
			Y_1\cdot u_1^{(i)}:=\zeta_n^iu_1^{(i)},~ Y_1\cdot u_2^{(i)}:=\zeta_n^{-i} u_2^{(i)},   \\
			Y_n\cdot u_1^{(i)}:=u_2^{(i)}, \quad Y_n\cdot u_2^{(i)}:=u_1^{(i)},
		\end{array}
		\right.
	\end{align}
	where $\zeta_n$ is a $n$-th primitive root of unity. Then we extend this action $\FC$-linearly to $A$.

\begin{lemm}
The Equations (\ref{modrelation}) define an $A$-module structure on all  $V_i$.
\end{lemm}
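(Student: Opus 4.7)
The plan is to verify directly that the three defining relations of $A$ listed in Equations (\ref{genEquat}), namely $Y_1^n=1$, $Y_1Y_n=Y_nY_1^{n-1}$, and $Y_n^2=\unit+\sum_{k=1}^n Y_1^kY_n$, are satisfied by the operators defined by (\ref{modrelation}) on each basis vector $u_1^{(i)}, u_2^{(i)}$; by $\FC$-linearity this is sufficient.

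For the first relation, iterating the action of $Y_1$ yields $Y_1^n\cdot u_1^{(i)}=\zeta_n^{ni}u_1^{(i)}=u_1^{(i)}$ since $\zeta_n^n=1$, and similarly for $u_2^{(i)}$. For the second relation, I would compute both sides on $u_1^{(i)}$: the left-hand side gives $Y_1\cdot u_2^{(i)}=\zeta_n^{-i}u_2^{(i)}$, while the right-hand side gives $Y_n\cdot\zeta_n^{(n-1)i}u_1^{(i)}=\zeta_n^{-i}u_2^{(i)}$, using $\zeta_n^{(n-1)i}=\zeta_n^{-i}$; the computation on $u_2^{(i)}$ is symmetric.

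The key step is the third relation. On $u_1^{(i)}$ the left-hand side equals $u_1^{(i)}$, and the right-hand side gives
\begin{align*}
u_1^{(i)}+\sum_{k=1}^{n}Y_1^k\cdot u_2^{(i)}=u_1^{(i)}+\Bigl(\sum_{k=1}^{n}\zeta_n^{-ki}\Bigr)u_2^{(i)}.
\end{align*}
The geometric sum $\sum_{k=1}^{n}\zeta_n^{-ki}$ vanishes provided $\zeta_n^{-i}\neq 1$, i.e., provided $i\not\equiv 0\pmod n$; this is precisely guaranteed by the range $1\leq i\leq[(n-1)/2]$. Hence the right-hand side reduces to $u_1^{(i)}$, matching the left. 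The analogous computation on $u_2^{(i)}$ works identically.

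The only potential subtlety, and the step I expect to require the hypothesis on $i$ most essentially, is this vanishing of the geometric sum; everything else is a direct substitution using $\zeta_n^n=1$. Once the three relations are verified on basis vectors, extending $\FC$-linearly gives a well-defined algebra homomorphism from $A$ to $\mathrm{End}_\FC(V_i)$, which is the asserted module structure.
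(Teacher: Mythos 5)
Your proposal is correct and follows the same route as the paper: the paper simply records the matrices of $Y_1$ and $Y_n$ and asserts that "a direct computation" shows they satisfy the generating relations (\ref{genEquat}), and your write-up is exactly that computation carried out on the basis vectors, including the one genuinely nontrivial point, the vanishing of $\sum_{k=1}^{n}\zeta_n^{\pm ki}$ for $1\leq i\leq[\frac{n-1}{2}]$.
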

\begin{proof}
Let $M_i:=\left({\begin{array}{cc}
					\zeta_n^i & 0\\ 0 & \zeta_n^{-i}
			\end{array}}\right)$ and $N_i:=\left({\begin{array}{cc}
					0& 1\\ 1 & 0
			\end{array}}\right)$,  where $1\leq i\leq \big[\frac{n-1}{2}\big]$. A direct computation shows that these two matrices satisfies the generating the Equations \ref{genEquat}, so the $A$-module structure is well-defined.
\end{proof}
	\begin{prop}\label{irre-rep-2}
		Let $V$ be an irreducible representation of $A$ with $\dim_\FC(V)>1$. Then $V$ is isomorphic to one of the $V_i$ defined by the Equations (\ref{modrelation}), $1\leq i\leq \big[\frac{n-1}{2}\big]$.
	\end{prop}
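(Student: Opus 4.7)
The plan is to diagonalize $Y_1$ first, then use the commutation relation with $Y_n$ to analyze how $Y_n$ permutes the $Y_1$-eigenspaces, and finally invoke irreducibility together with the quadratic relation $Y_n^2 = \unit + \sum_{j=1}^n Y_1^j Y_n$ to pin down $V$. Since $Y_1^n = \unit$, the operator $Y_1$ is diagonalizable on $V$ with eigenvalues among the $n$-th roots of unity, so $V = \bigoplus_{i=0}^{n-1} V(\zeta_n^i)$. The relation $Y_1 Y_n = Y_n Y_1^{n-1}$, combined with $n-1 \equiv -1 \pmod n$, shows that $Y_n$ sends $V(\zeta_n^i)$ into $V(\zeta_n^{-i})$. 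Hence each $V(\zeta_n^i) \oplus V(\zeta_n^{-i})$ is $A$-stable, and by irreducibility $V$ equals exactly one such summand, indexed by an orbit $\{i, -i\}$ in $\Z/n\Z$.

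Next I would rule out the degenerate orbits where $i \equiv -i \pmod n$, namely $i = 0$ and (when $n$ is even) $i = n/2$. In those cases $Y_1$ acts as a scalar $\pm 1$, which reduces the quadratic relation to a polynomial equation in $Y_n$ alone; $Y_n$ then admits a $\FC$-eigenvector, forcing $\dim_\FC V = 1$ by irreducibility and contradicting the hypothesis. For a nondegenerate orbit, pick $0 \neq u \in V(\zeta_n^i)$ and set $u' := Y_n u \in V(\zeta_n^{-i})$; these lie in distinct $Y_1$-eigenspaces, hence are linearly independent. Applying the quadratic relation to $u$ gives $Y_n u' = u + \big(\sum_{j=1}^n \zeta_n^{-ij}\big) u'$, and the geometric sum vanishes because $i \not\equiv 0 \pmod n$, so $Y_n u' = u$. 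Therefore $\FC u + \FC u'$ is closed under both generators; irreducibility forces $V = \FC u + \FC u'$, and reading off the matrices of $Y_1, Y_n$ in the basis $\{u, u'\}$ recovers exactly Equations (\ref{modrelation}), whence $V \cong V_i$.

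To finish, the orbits $\{i, -i\}$ modulo $n$ are parametrized bijectively by $1 \leq i \leq [\frac{n-1}{2}]$ once the degenerate cases $i = 0$ and $i = n/2$ are excluded, and $V_i$ is isomorphic to its mirror $V_{n-i}$ via the basis swap $u_1^{(i)} \leftrightarrow u_2^{(i)}$, so this range yields a complete non-redundant list. The main conceptual step is the eigenspace analysis in the first paragraph; the one delicate point I expect is the vanishing of the character sum $\sum_{j=1}^n \zeta_n^{-ij}$ for $i \not\equiv 0 \pmod n$, which is what collapses the right-hand side of the quadratic relation and traps the whole module inside a two-dimensional subspace. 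Everything else reduces to routine eigenspace bookkeeping.
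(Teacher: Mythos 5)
Your proof is correct, but it follows a genuinely different route from the paper. The paper's proof is a completeness-by-counting argument: it first checks that the $V_i$ are pairwise non-isomorphic (distinct traces of $Y_1$), then that each $V_i$ is irreducible (no one-dimensional submodule can be a $Y_1$-eigenline), and finally invokes the Wedderburn decomposition together with the identity $\dim_\FC(A)=2n=2+\sum 2^2$ (resp.\ $4+\sum 2^2$) to conclude that no other irreducible representations can exist. You instead classify directly: diagonalize $Y_1$ using $Y_1^n=\unit$, observe from $Y_1Y_n=Y_nY_1^{n-1}$ that $Y_n$ swaps the eigenspaces $V(\zeta_n^i)$ and $V(\zeta_n^{-i})$, discard the self-paired eigenvalues $\pm1$ (where $Y_1$ is scalar and $Y_n$ has an eigenline, forcing dimension one), and in the remaining cases trap the module in the two-dimensional span of $u$ and $Y_nu$ via the vanishing of $\sum_{j=1}^n\zeta_n^{-ij}$. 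Your approach is self-contained and constructive: it does not presuppose the candidate list $V_1,\dots,V_{[\frac{n-1}{2}]}$, does not rely on semisimplicity of $A$ or the sum-of-squares bookkeeping, and it explains structurally why the index range is governed by the orbits of $i\mapsto -i$ on $\Z/n\Z$; the paper's argument is shorter once Lemma 3.5 is in hand but gives less insight into why nothing else can occur. One small point to tighten: before declaring $u$ and $u':=Y_nu$ linearly independent you should note $u'\neq0$; this follows immediately from your own identity $Y_nu'=u$ (if $u'=0$ then $u=0$), so the gap is self-repairing, but as written the independence claim slightly precedes its justification.
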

	\begin{proof}
		For any  $1\leq i\leq\big[\frac{n-1}{2}\big]$, by Equations (\ref{modrelation}), we have
		\begin{align*}
			Y_1(u_1^{(i)},u_2^{(i)})=(u_1^{(i)},u_2^{(i)})\left({\begin{array}{cc}
					\zeta_n^i & 0\\ 0 & \zeta_n^{-i}
			\end{array}}\right)=(u_1^{(i)},u_2^{(i)})M_i.
		\end{align*}
		Note that the traces $\text{tr}(M_i)=\zeta_n^i+\zeta_n^{-i}$ are different from each other for $1\le i \le [\frac{n-1}{2}]$, which means   these  representations $V_i$ of $A$ are not isomorphic to each other.

Next, we show that each $V_i$ is   irreducible. In fact, assume  $V_i$ is reducible for some $i$, and let $V'$ be a proper sub-representation of $V_i$, obviously $\dim_\FC(V')=1$. One the one hand, it follows from  the Equations (\ref{modrelation}) that $V'\neq \FC u_1^{(i)}$ and $V'\neq \FC u_2^{(i)}$. Therefore,   $V'=\FC(u_1^{(i)}+\lambda u_2^{(i)})$ for some nonzero scalar  $\lambda$, so  $u_1^{(i)}+\lambda u_2^{(i)}$ must be an eigenvector of $Y_1$. On the other hand, by definition, we have
		\begin{align*}
			Y_1(u_1^{(i)}+\lambda u_2^{(i)})=\zeta_n^iu_1^{(i)}+\lambda \zeta_n^{-i}u_2^{(i)}
			=\zeta_n^i(u_1^{(i)}+\lambda \zeta_n^{-2i} u_2^{(i)}).
		\end{align*}
		Hence, we have  $\zeta_n^{2i}=1$.  However, $\zeta_n$ is a primitive $n$-th root of unity and $2i<n$ for all $1\leq i\leq [\frac{n-1}{2}]$, it is impossible. So $V_i$ is irreducible for all $i$.
		
		Notice that
		\begin{align*}
			\dim_\FC(A)=2n=\left\{
			\begin{array}{ll}
				2+\sum_{i=1}^{\frac{n-1}{2}}\dim_\FC(V_i)^2, & \hbox{if $n$ is odd,} \\
				4+\sum_{i=1}^{\frac{n-2}{2}}\dim_\FC(V_i)^2, & \hbox{if $n$ is even,}
			\end{array}
			\right.
		\end{align*}
		hence, together with Lemma \ref{irre-rep-1}, the Wedderburn' s Theorem shows that any irreducible representation with dimension being
 larger than $1$ must be isomorphic to one of these $V_i$  ($1\leq i\leq [\frac{n-1}{2}])$. This finishes the proof of the proposition.
	\end{proof}
We note that  irreducible  representations for fusion rings with two orbits are also constructed in \cite[Proposition 2.16]{Sch}.

Recall that the Drinfeld center $\Y(\C)$ of a   fusion category $\C$ is a  non-degenerate braided fusion category, and there is a surjective tensor functor $F$ (called the forgetful functor) from $\Y(\C)$ to $\C$.
	\begin{theo}\label{formdegreeHI}
If $n$ is odd, then the formal codegrees of $\HI_{\Z_n}$ are
\begin{align*}
\FPdim(\HI_{\Z_n}), \sigma(\FPdim(\HI_{\Z_n})),   \overbrace{n,\cdots,n}^{\frac{n-1}{2}},
\end{align*}
 and if $n$ is even, then the formal codegrees of the fusion ring $R$ are
\begin{align*}
\FPdim(\HI_{\Z_n}), \sigma(\FPdim(\HI_{\Z_n})), 2n,2n,  \overbrace{n,\cdots,n}^{\frac{n-2}{2}},
\end{align*}
where $\sigma$ is a generator of $\Gal(\FQ(\sqrt{n^2+4})/\FQ)$. Therefore, the formal codegrees of the fusion ring $\HI_{\Z_n}$ satisfy the pseudo-unitary inequality.
	\end{theo}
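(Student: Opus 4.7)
The plan is to compute $f_\varphi$ for every irreducible representation $\varphi$ classified in Lemma \ref{irre-rep-1} and Proposition \ref{irre-rep-2}, and then verify (\ref{pseudounitary}) by direct summation. For each one-dimensional $\varphi$ with $x = \varphi(Y_1)$ and $y = \varphi(Y_n)$, I would apply $\varphi$ to the identity $c(\unit) = 2n\unit + n(Y_n + Y_{n+1} + \cdots + Y_{2n-1})$ recorded in the proof of Proposition \ref{determinant}; by Ostrik's lemma, $c(\unit)$ acts as the scalar $f_\varphi$ on a one-dimensional module, yielding $f_\varphi = 2n + ny\sum_{i=0}^{n-1} x^i$. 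When $x = 1$ and $y = (n \pm \sqrt{n^2+4})/2$, this becomes $2n + n^2 y$, which factors as $n\sqrt{n^2+4}(n+\sqrt{n^2+4})/2$ or its Galois conjugate under $\sigma$, i.e.\ $\FPdim(\HI_{\Z_n})$ and $\sigma(\FPdim(\HI_{\Z_n}))$. When $x = -1$ (only for $n$ even), the geometric sum $\sum_i (-1)^i$ vanishes and $f_\varphi = 2n$, independently of the choice $y \in \{\pm 1\}$.

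For each two-dimensional representation $V_i$ with $1 \leq i \leq [(n-1)/2]$, I would compute $\alpha_{V_i}$ directly from its definition. Reading off the matrices $M_i, N_i$ in Proposition \ref{irre-rep-2} gives $\text{Tr}_{V_i}(Y_j) = \zeta_n^{ij} + \zeta_n^{-ij}$ for $0 \leq j \leq n-1$ and $\text{Tr}_{V_i}(Y_{n+j}) = 0$; the duality structure of $\HI_{\Z_n}$ reads $Y_j^* = Y_{n-j}$ and $Y_{n+j}^* = Y_{n+j}$. After re-indexing the nonzero contributions I obtain $\alpha_{V_i} = \sum_{k=0}^{n-1} (\zeta_n^{ik} + \zeta_n^{-ik}) Y_k$, and evaluating on $u_1^{(i)}$ via (\ref{modrelation}) produces the scalar $n + \sum_{k=0}^{n-1} \zeta_n^{2ik}$. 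Since $1 \leq i \leq [(n-1)/2]$ forces $\zeta_n^{2i} \neq 1$, the character sum vanishes, so $f_{V_i} = n$.

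It remains to verify the pseudo-unitary inequality. The arithmetic identities $\FPdim(\HI_{\Z_n}) + \sigma(\FPdim(\HI_{\Z_n})) = n(n^2+4)$ and $\FPdim(\HI_{\Z_n}) \cdot \sigma(\FPdim(\HI_{\Z_n})) = n^2(n^2+4)$, obtained from Vi\`ete applied to the minimal polynomial of $y$, give
\[
\frac{1}{\FPdim(\HI_{\Z_n})^2} + \frac{1}{\sigma(\FPdim(\HI_{\Z_n}))^2} = \frac{n^2+2}{n^2(n^2+4)}.
\]
Adding the remaining terms, both parity cases collapse to the same total
\[
\sum_{\varphi} \frac{1}{f_\varphi^2} = \frac{n^2+n+4}{2n(n^2+4)},
\]
since in the even case $2/(2n)^2 + ((n-2)/2)\cdot 1/n^2 = (n-1)/(2n^2)$ matches the $((n-1)/2)\cdot 1/n^2$ contribution of the odd case. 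For $n \geq 2$ the inequality $\frac{n^2+n+4}{n(n^2+4)} \leq 1$ reduces to $n^3 - n^2 + 3n - 4 \geq 0$, which is clear, so the left-hand side is already bounded by $1/2$ and a fortiori by the right-hand side of (\ref{pseudounitary}); the case $n=1$ is a direct check against $\FPdim(\HI_{\Z_1}) = (5+\sqrt{5})/2$. The main obstacle is the bookkeeping that makes the two parity cases collapse identically, but this is transparent once the arithmetic identities above are in place.
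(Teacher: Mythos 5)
Your proposal is correct and follows essentially the same route as the paper: evaluate $c(\unit)=2n\unit+n\sum_{j=n}^{2n-1}Y_j$ on the one-dimensional representations of Lemma \ref{irre-rep-1}, extract the codegrees of the two-dimensional representations of Proposition \ref{irre-rep-2}, and sum to get $\frac{n^2+n+4}{2n(n^2+4)}$ in both parity cases. The only (harmless) deviations are that you compute $\alpha_{V_i}$ directly from its definition where the paper invokes Ostrik's lemma that $c(\unit)$ acts as $f_\varphi\dim(V_i)=2n$, and that you are more careful than the paper about the small cases, correctly flagging that $n=1$ needs a separate check since $n^3-n^2+3n-4<0$ there.
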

\begin{proof}
Let $n\geq3$ be an odd integer, the other case is similar.  A direct computation shows  \begin{align*}
c(\unit)=2n\unit+n\sum_{j=n}^{2n-1}Y_j.
\end{align*}
 Then Lemma \ref{irre-rep-1}  shows that $c(\unit)$ acts on one-dimensional representations as scalar $\FPdim(\HI_{\Z_n})$ and $\sigma(\FPdim(\HI_{\Z_n}))$, which are formal codegrees of $\HI_{\Z_n}$. And $c(\unit)$ acts on all two-dimensional irreducible representations as $2n$ by Proposition \ref{irre-rep-2}, so all the corresponding  formal codegrees of $\HI_{\Z_n}$ are  $n$ by definition. Meanwhile,
\begin{align*}
\sum_{\chi}\frac{1}{f_\chi^2}&=\frac{n-1}{2n^2}+\frac{1}{\sigma(\FPdim(\HI_{\Z_n}))^2}
+\frac{1}{\FPdim(\HI_{\Z_n})^2}\\
&=\frac{n^2+n+4}{2n(n^2+4)}<\frac{1}{2}\left(1+\frac{1}{\dim(\C)}\right).
\end{align*}
This finishes the proof of the theorem.
\end{proof}
\begin{lemm}\label{dimenHI}
Let $\C$ be a fusion category $\C$ with $\text{Gr}(\C)=\HI_{\Z_n}$. Then $\dim(\C)\neq n,2n$. In particular, $\dim(\C)$ is a Galois conjugate of $\FPdim(\HI_{\Z_n})$.
\end{lemm}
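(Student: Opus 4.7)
The categorical dimension $\dim(-):\text{Gr}(\C)\to\FC$ is a ring homomorphism, hence a one-dimensional representation of $A:=\HI_{\Z_n}\otimes_{\Z}\FC$, so $\dim(\C)$ is exactly the formal codegree attached to this one-dimensional representation. My first step is to enumerate the candidates using Lemma \ref{irre-rep-1} together with Theorem \ref{formdegreeHI}: the one-dimensional representations contribute the codegrees $\FPdim(\HI_{\Z_n})$, $\sigma(\FPdim(\HI_{\Z_n}))$, and, when $n$ is even, the additional value $2n$ (from the two characters with $x=-1$, $y=\pm1$). Since $n$ arises in the codegree list only through the two-dimensional irreducible representations $V_i$ of Proposition \ref{irre-rep-2}, it cannot be the codegree of a one-dimensional representation, and hence $\dim(\C)\neq n$ is automatic.

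The inequality $\dim(\C)\neq 2n$ is immediate for odd $n$, since $2n$ is then not a formal codegree at all. For even $n$ I argue by contradiction. Assume $\dim(\C)=2n$; then the one-dimensional character realising $\dim$ must be one of those with $x=-1$, giving $\dim(g)=-1$ for a generator $g$ of $\Z_n$, and the fusion rule $X\otimes X=\unit+\sum_{h\in\Z_n}hX$ forces
\[
\dim(X)^{2}=1+\dim(X)\sum_{h\in\Z_n}\dim(h)=1,
\]
so $\dim(X)=\pm1\in\FQ$. A contradiction is then derived from the principle that in a fusion category $\dim(\C)$ must lie in the Galois orbit of $\FPdim(\C)$: since $\FPdim(\HI_{\Z_n})\in\FQ(\sqrt{n^2+4})\setminus\FQ$ has only the two Galois conjugates $\FPdim(\HI_{\Z_n})$ and $\sigma(\FPdim(\HI_{\Z_n}))$ over $\FQ$, the rational number $2n$ cannot be Galois-conjugate to $\FPdim(\HI_{\Z_n})$. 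This yields $\dim(\C)\neq 2n$ in the even case as well.

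With $n$ and $2n$ ruled out, the ``in particular'' clause follows at once: the only remaining possibilities for the codegree of a one-dimensional representation of $A$ are $\FPdim(\HI_{\Z_n})$ and $\sigma(\FPdim(\HI_{\Z_n}))$, which are exactly the two Galois conjugates of $\FPdim(\HI_{\Z_n})$ over $\FQ$.

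The hard part will be making the Galois-orbit constraint precise in the even-$n$ step. At the level of the bare fusion ring, the one-dimensional characters with $x=1$ and with $x=-1$ are \emph{not} Galois-conjugate, so the constraint on $\dim(\C)$ and $\FPdim(\C)$ must be drawn from categorical structure rather than ring-theoretic data alone. I plan to obtain it by passing to the Drinfeld center $\Y(\C)$, a modular tensor category with $\dim(\Y(\C))=\dim(\C)^{2}$ and $\FPdim(\Y(\C))=\FPdim(\C)^{2}$, where Galois-equivariance of $\dim$ and $\FPdim$ is standard, and then descending the conclusion to $\C$ via the surjective forgetful functor $\Y(\C)\to\C$.
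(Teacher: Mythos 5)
Your reduction of the problem to formal codegrees is sound in outline, and your exclusion of $n$ (it occurs only as the codegree of the two-dimensional representations $V_i$, while $\dim(-)$ is a one-dimensional character) is a legitimate argument, modulo the caveat that it needs a pivotal structure for $\dim(-)$ to be a ring homomorphism, whereas the lemma is stated for an arbitrary fusion category. But the crucial step, ruling out $\dim(\C)=2n$ for even $n$, has a genuine gap. You derive the contradiction from ``the principle that in a fusion category $\dim(\C)$ must lie in the Galois orbit of $\FPdim(\C)$.'' This is not a citable general theorem: what is known is that $\FPdim(\C)/\dim(\C)$ is an algebraic integer (and one can check that $\FPdim(\HI_{\Z_n})/2n$ passes this test for even $n$, so that route gives nothing), and whether $\dim(\C)$ and $\FPdim(\C)$ are always Galois conjugate is precisely the kind of statement Ostrik's work on formal codegrees is designed to attack case by case. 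Worse, the principle you invoke is logically equivalent to the ``in particular'' clause of the lemma itself: if you were allowed to assume it, the entire lemma would follow in one line, since $n$ and $2n$ are rational and $\FPdim(\HI_{\Z_n})$ is not. Your fallback plan via the Drinfeld center does not repair this, because the claimed ``standard Galois-equivariance'' in $\Y(\C)$ forcing $\dim(\Y(\C))$ into the Galois orbit of $\FPdim(\Y(\C))$ is again the unproven assertion, now one level up; the Galois action on a modular category permutes the characters of the Verlinde ring, but nothing forces the Frobenius--Perron character into the orbit of the unit column.

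For comparison, the paper closes both cases with two results of Ostrik on formal codegrees of based rings: by \cite[Theorem 4.2.2]{O3} every formal codegree of a based ring of rank $r$ has a Galois conjugate $\geq r$, and since $\dim(\C)$ is a formal codegree of $\text{Gr}(\C)$ this rules out the rational value $n<2n$ at once; and by \cite[Remark 4.2.3]{O3} the extremal case $\dim(\C)=2n=\text{rank}$ occurs only when all simple objects have Frobenius--Perron dimension $1$, which fails for $\HI_{\Z_n}$ because $\FPdim(X)=\frac{n+\sqrt{n^2+4}}{2}>1$. That equality criterion is the categorical input you are missing; your character computation showing $\dim(g)=-1$, $\dim(X)=\pm1$ produces no contradiction on its own, since such a character genuinely exists on the fusion ring with formal codegree exactly $2n$.
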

\begin{proof}
Notice that the rank of $\C$ is  $2n$, \cite[Theorem 4.2.2]{O3} states that there is a Galois conjugate of $\dim(\C)$ that is no less than $2n$, so $\dim(\C)\neq n$. Moreover, $\dim(\C)=2n$ if and only if all simple objects of $\C$ has Frobenius-Perron dimension $1$
by \cite[Remark 4.2.3]{O3}, which is impossible for Haagerup-Izumi fusion rings.
\end{proof}

The following corollary is a direct result of \cite[Theorem 2.13]{O2}, Theorem \ref{formdegreeHI} and Lemma \ref{dimenHI}.
\begin{coro}
Let $\C$ be a spherical fusion category with Haagerup-Izumi fusion rules determined by $\Z_n$, where $n\geq3$. Let $I:\C\to\Y(\C)$ be the adjoint functor to forgetful functor $F:\Y(\C)\to\C$. Then
\begin{align*}
I(\unit)=
\left\{
  \begin{array}{ll}
    \unit\oplus X_1\oplus 2Z_1\oplus\cdots\oplus2Z_{\frac{n-1}{2}}, & \hbox{if $n$ is odd;} \\
    \unit\oplus X_1\oplus W_1\oplus W_2\oplus 2Z_1\oplus\cdots\oplus2Z_{\frac{n-2}{2}}, & \hbox{if $n$ is even.}
  \end{array}
\right.
\end{align*}
where $\dim(X_1)=\frac{\dim(\C)}{\sigma(\dim(\C))}$, $\dim(W_1)=\dim(W_2)=\frac{\dim(\C)}{2n}$, and $\dim(Z_j)=\frac{\dim(\C)}{n}$, $1\leq j\leq [\frac{n-1}{2}]$.
\end{coro}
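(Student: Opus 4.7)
The plan is to invoke Ostrik's decomposition \cite[Theorem 2.13]{O2} of $I(\unit)$ into simples indexed by the irreducible characters of $\text{Gr}(\C)\otimes_\Z\FC$, namely $I(\unit) = \bigoplus_\chi (\dim\chi)\cdot Z_\chi$, where each $Z_\chi$ is a simple object of $\Y(\C)$ with $\dim(Z_\chi) = \dim(\C)/f_\chi$. The corollary then becomes a matter of reading off the pairs $(\dim\chi, f_\chi)$ from Lemma \ref{irre-rep-1}, Proposition \ref{irre-rep-2} and Theorem \ref{formdegreeHI}, with the one nontrivial step being the identification of the $\unit$-summand inside $I(\unit)$.

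For odd $n\geq 3$, Theorem \ref{formdegreeHI} supplies two one-dimensional characters with codegrees $\FPdim(\HI_{\Z_n})$ and $\sigma(\FPdim(\HI_{\Z_n}))$, and $\tfrac{n-1}{2}$ two-dimensional characters $V_i$ each with codegree $n$. Now the categorical dimension homomorphism $\dim(-):\text{Gr}(\C)\otimes_\Z\FC\to\FC$ is itself an irreducible character whose formal codegree is $\dim(\C)$; by Lemma \ref{dimenHI}, $\dim(\C)$ is a Galois conjugate of $\FPdim(\HI_{\Z_n})$, hence equals one of the two one-dimensional codegrees. For that character the simple $Z_\chi$ has dimension $\dim(\C)/\dim(\C)=1$, so it is $\unit$, while the other one-dimensional character contributes a simple $X_1$ with $\dim X_1 = \dim(\C)/\sigma(\dim(\C))$ and multiplicity one. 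Each $V_i$ yields $2\cdot Z_i$ with $\dim Z_i = \dim(\C)/n$, giving the odd-$n$ formula verbatim.

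The even case is handled identically, using the extra pair of one-dimensional characters with common codegree $2n$: these produce two (non-isomorphic, since they correspond to distinct characters) simples $W_1,W_2$ with $\dim W_j = \dim(\C)/(2n)$, each of multiplicity one, while the remaining $\tfrac{n-2}{2}$ two-dimensional characters contribute the $Z_j$'s as before.

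The only real obstacle is pinning down which character indexes the $\unit$-summand; once Lemma \ref{dimenHI} rules out the two-dimensional characters (whose codegrees are $n$, never a Galois conjugate of $\FPdim(\HI_{\Z_n})$) and, in the even case, the two codegree-$2n$ characters, the matching is forced and the stated dimensions follow from $\dim Z_\chi = \dim(\C)/f_\chi$. A sanity check is $\dim I(\unit)=\dim(\C)$, which for odd $n$ reduces to the identity $1/\FPdim(\HI_{\Z_n})+1/\sigma(\FPdim(\HI_{\Z_n}))+(n-1)/n=1$, easily verified from $\FPdim(\HI_{\Z_n})\cdot\sigma(\FPdim(\HI_{\Z_n})) = n^2(n^2+4)$ and $\FPdim(\HI_{\Z_n})+\sigma(\FPdim(\HI_{\Z_n}))=n(n^2+4)$.
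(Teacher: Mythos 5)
Your proposal is correct and follows exactly the route the paper intends: the paper gives no written proof beyond declaring the corollary ``a direct result of \cite[Theorem 2.13]{O2}, Theorem \ref{formdegreeHI} and Lemma \ref{dimenHI}'', and your argument is precisely the unpacking of those three ingredients, including the identification of the $\unit$-summand via Lemma \ref{dimenHI}. The dimension sanity check at the end is a nice additional confirmation not present in the paper.
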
	

	\section*{Acknowledgements}
	The authors thanks L. Li and A. Schopieray for comments. The third author was supported by NSFC (no. 12571041) and  Qinglan Project of Yangzhou University.
	
	\bigskip\author{{Ying Zheng, \thanks{Email:\,yzhengmath@yzu.edu.cn}, \\
			Jiacheng Bao, \thanks{Email:\,MX120240308@stu.yzu.edu.cn},\\
{Zhiqiang Yu, \thanks{Email:\,zhiqyumath@yzu.edu.cn}\\
{\small School  of Mathematics,  Yangzhou University, Yangzhou 225002, China}}


\begin{thebibliography}{50}
\bibitem[1]{AH} M. Asaeda and U. Haagerup, Exotic subfactors of finite depth with Jones index
$(5 +\sqrt{13})/2$ and $(5+\sqrt{17})/2$, Comm. Math. Phys. $\mathbf{202}$ (1999), 1-63.
		\bibitem[2]{Bud}P. Budinski, Exotic fusion categories and their modular data, Master thesis of University of Albert, 2021.
		\bibitem[3]{EGNO}P. Etingof, S. Gelaki, D. Nikshych and V. Ostrik, Tensor categories, Mathematical Surveys and Monographs $\textbf{205}$, Amer. Math. Soc., 2015.
\bibitem[4]{Ha}U. Haagerup, Principal graphs of subfactors in the index range $4<[M:N]<3+\sqrt{3}$, Subfactors,  1-38, World Scientific, River Edge, NJ, 1994.		
		\bibitem[5]{Iz}M. Izumi. The structure of sectors associated with Longo-Rehren inclusions, II. Examples, Rev. Math. Phys. $\textbf{13}$ (2001), 603-674.	
		\bibitem[6]{Lo}M. Lorenz, Some applications of Frobenius algebras to Hopf algebras, in \emph{Groups, algebras and applications}, 269-289, Contemp. Math. $\textbf{537}$, Amer. Math. Soc., Providence, RI,  2011.
		\bibitem[7]{Lu}G. Lusztig,	Leading coefficients of character values of Hecke algebras, CRM Monograph Series, $\mathbf{18}$. Amer. Math. Soc.,  2003.
		
		\bibitem[8]{O1} V. Ostrik, On formal codegree of fusion category, Math. Res. Lett. $\mathbf{16 }$ (2009), no. 5, 899-905.
\bibitem[9]{O2}V. Ostrik, On pivotal fusion categories of rank $3$, Mosc. Math. J. $\mathbf{15}$ (2015), no. 2, 373-396.
\bibitem[10]{O3}V. Ostrik, Remark on global dimension of  fusion category,  Tensor categories and Hopf algebras, 169-180, Contemp. Math. $\mathbf{728}$, Amer. Math. Soc., Providence, RI, 2019.
\bibitem[11]{Sch}A. Schopieray, Categorification of integral group rings extended by one dimension, J. London Math. Soc. (2). $\mathbf{108}$ (2023), no. 4, 1617-1641.
		\bibitem[12]{WLL}Z. Wang, G. Liu and L. Li, The Casimir number and determinant of a fusion category, Glasgow Math. J. $\mathbf{63}$ (2021), no. 2, 438-450.
		
	\end{thebibliography}
	\end{document}